\newtheorem{theorem}{Theorem}
\newtheorem{lemma}[theorem]{Lemma}
\newtheorem{claim}{Claim}
\begin{document}
\onehalfspace

\title{Bounding the Mostar index}
\author{
\v{S}tefko Miklavi\v{c}$^1$\and 
Johannes Pardey$^2$\and 
Dieter Rautenbach$^2$\and 
Florian Werner$^2$}
\date{}

\maketitle
\vspace{-10mm}
\begin{center}
{\small 
$^1$ University of Primorska, Institute Andrej Maru\v{s}i\v{c}, Koper, Slovenia\\
\texttt{stefko.miklavic@upr.si}\\[3mm]
$^2$ Institute of Optimization and Operations Research, Ulm University, Ulm, Germany\\
\texttt{$\{$johannes.pardey,dieter.rautenbach,florian.werner$\}$@uni-ulm.de}
}
\end{center}

\begin{abstract}
Do\v{s}li\'{c} et al.~defined the Mostar index of a graph $G$ 
as $Mo(G)=\sum\limits_{uv\in E(G)}|n_G(u,v)-n_G(v,u)|$,
where, for an edge $uv$ of $G$,
the term $n_G(u,v)$ denotes the number of vertices of $G$
that have a smaller distance in $G$ to $u$ than to $v$.
They conjectured that $Mo(G)\leq 0.\overline{148}n^3$
for every graph $G$ of order $n$.
As a natural upper bound on the Mostar index,
Geneson and Tsai implicitly consider the parameter 
$Mo^\star(G)=\sum\limits_{uv\in E(G)}\big(n-\min\{ d_G(u),d_G(v)\}\big)$.
For a graph $G$ of order $n$,
they show that $Mo^\star(G)\leq \frac{5}{24}(1+o(1))n^3$.

We improve this bound to $Mo^\star(G)\leq \left(\frac{2}{\sqrt{3}}-1\right)n^3$,
which is best possible up to terms of lower order.
Furthermore, 
we show that 
$Mo^\star(G)\leq 
\left(2\left(\frac{\Delta}{n}\right)^2+\left(\frac{\Delta}{n}\right)-2\left(\frac{\Delta}{n}\right)\sqrt{\left(\frac{\Delta}{n}\right)^2+\left(\frac{\Delta}{n}\right)}\right)n^3$
provided that $G$ has maximum degree $\Delta$.\\[3mm]
{\bf Keywords:} Mostar index; distance unbalance
\end{abstract}

\section{Introduction}

Do\v{s}li\'{c} et al.~\cite{domasktizu} defined the {\it Mostar index} $Mo(G)$ 
of a (finite and simple) graph $G$ as
$$Mo(G)=\sum\limits_{uv\in E(G)}|n_G(u,v)-n_G(v,u)|,$$
where, for an edge $uv$ of $G$,
the term $n_G(u,v)$ denotes the number of vertices of $G$
that have a smaller distance in $G$ to $u$ than to $v$.
Since its introduction in 2018 
the Mostar index has already incited a lot of research,
mostly concerning sparse graphs and trees  {\cite{AXK, DL2, DL3, HZ1, HZ2, Tepeh}, chemical graphs \cite{CLXZ, DL1, GA, GR, XZTHD}, and hypercube-related graphs \cite{Mollard, OSS}, see also the recent survey \cite{aldo}.

Do\v{s}li\'{c} et al.~\cite{domasktizu} conjectured that $S_{n/3,2n/3}$ 
has maximum Mostar index 
among all graphs of order $n$
(cf. \cite[Conjecture 20]{domasktizu}),
where $n$ is a multiple of $3$, and 
$S_{k,n-k}$ denotes the split graph that arises from the disjoint
union of a clique $C$ of order $k$ and an independent set $I$ of order $n-k$
by adding all possible edges between $C$ and $I$.
Note that 
$$Mo(S_{n/3,2n/3})=\frac{4}{27}(1-o(1))n^3=0.\overline{148}(1-o(1))n^3.$$
As observed in \cite{domasktizu}
the Mostar index of a graph $G$ of order $n$ is less than $\frac{n^3}{2}$;
each of its less than $\frac{n^2}{2}$ edges
contributes less than $n$ to $Mo(G)$.
Geneson and Tsai \cite{gets} improved this trivial upper bound 
to $\frac{5}{24}(1+o(1))n^3\approx 0.2083(1+o(1))n^3$.
They actually show this upper bound for the parameter
$$Mo^\star(G)=\sum\limits_{uv\in E(G)}\big(n-\min\{ d_G(u),d_G(v)\}\big),$$
where $d_G(u)$ denotes the degree of a vertex $u$ in $G$.
The parameter $Mo^\star(G)$ is a natural upper bound on $Mo(G)$:
If $uv$ is an edge of $G$ with $n_G(u,v)\geq n_G(v,u)$, 
then 
$n_G(v,u)\geq |\{ v\}|=1$ 
and
$n_G(u,v)\leq |V(G)\setminus (N_G[v]\setminus \{ u\})|=n-d_G(v)$,
where $N_G[v]$ denotes the closed neighborhood of $v$ in $G$.
We obtain
$$
|n_G(u,v)-n_G(v,u)|
=n_G(u,v)-n_G(v,u)
\leq n-1-d_G(v)
<n-\min\big\{ d_G(u),d_G(v)\big\},$$
and, hence, 
$$\mbox{$Mo(G)\leq Mo^\star(G)$ for every graph $G$.}$$

As our first main result, we prove the following.

\begin{theorem}\label{theorem3}
If $G$ is a graph of order $n$, then 
$Mo^\star(G)\leq \left(\frac{2}{\sqrt{3}}-1\right)n^3\leq 0.1548n^3.$
\end{theorem}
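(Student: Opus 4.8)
The plan is to reduce the statement to a one‑dimensional extremal problem over degree distributions, guided by the guess that the extremal graphs are the complete multipartite graphs with geometrically decreasing part sizes. The starting point is an edge‑counting identity: for $t\in\{0,\dots,n-1\}$ let $m_t(G)$ be the number of edges of $G$ incident with at least one vertex of degree at most $t$. Since $n-\min\{d_G(u),d_G(v)\}=\big|\{t:\min\{d_G(u),d_G(v)\}\le t\le n-1\}\big|$, exchanging the order of summation gives
$$Mo^\star(G)=\sum_{t=0}^{n-1}m_t(G),$$
so it suffices to bound $\sum_t m_t(G)$.

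Next I would bound each $m_t$ in terms of the degree sequence in a way that is sharp for the expected extremal graphs. Let $S_t$ be the set of vertices of degree at most $t$, let $p_t=|S_t|$, and let $D_t=\sum_{v\in S_t}d_G(v)$. Every edge counted by $m_t$ lies inside $S_t$ or between $S_t$ and $V\setminus S_t$, so $2m_t=2e(S_t)+2e(S_t,V\setminus S_t)=D_t+e(S_t,V\setminus S_t)$; bounding the last term by $\sum_{v\in S_t}\min\{d_G(v),\,n-p_t\}$ yields
$$m_t\le\tfrac12\Big(D_t+\sum_{v\in S_t}\min\{d_G(v),\,n-p_t\}\Big).$$
This is tight for complete multipartite graphs (there $S_t$ is completely joined to its complement) and, for instance, also for threshold graphs, for which it reproduces $Mo^\star=\tfrac18 n^3(1+o(1))$. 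The cruder estimate $e(S_t,V\setminus S_t)\le p_t(n-p_t)$, which only gives $Mo^\star(G)\le\tfrac12\sum_i d_i(2i-1-d_i)$ for the sorted degrees $d_1\le\dots\le d_n$, is too lossy on threshold‑type sequences, so the particular bound used here is the crucial choice.

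Summing the displayed bound over $t$, writing the sorted degrees as $d_i=\delta(i/n)\,n$ and letting $n\to\infty$, I would obtain $n^{-3}Mo^\star(G)\le\big(\tfrac2{\sqrt3}-1\big)+o(1)$ once it is shown that the resulting functional of the non‑decreasing profile $\delta:[0,1]\to[0,1]$ (and of its generalised inverse, the normalised counting function of low‑degree vertices) has maximum $\tfrac2{\sqrt3}-1$. A rearrangement argument should reduce this to maximising over step functions $\delta$, i.e.\ to
$$M:=\max\Big\{\sum_{i\ge1}x_i^2\Big(1-\textstyle\sum_{j\le i}x_j\Big)\ :\ x_1\ge x_2\ge\cdots\ge0,\ \textstyle\sum_i x_i\le1\Big\},$$
which is exactly $\lim_{n\to\infty}n^{-3}Mo^\star$ of the complete multipartite graph with part sizes $x_1n\ge x_2n\ge\cdots$.

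Finally I would evaluate $M$. Splitting off the largest part of relative size $a$ and rescaling the remaining parts into the other $(1-a)n$ vertices gives the self‑referential identity $M=\max_{a\in[0,1]}\big(a^2(1-a)+(1-a)^3M\big)$. At the optimum $2a(1-a)-a^2-3(1-a)^2M=0$, and eliminating $M$ using $M=a^2(1-a)/\big(1-(1-a)^3\big)$ reduces, after simplification, to $2(1-a)^2+2(1-a)-1=0$, hence $1-a=\tfrac{\sqrt3-1}{2}$ and $M=\tfrac2{\sqrt3}-1$; this also identifies the extremal examples as (truncations of) the complete multipartite graph with part sizes $\tfrac{3-\sqrt3}{2}\big(\tfrac{\sqrt3-1}{2}\big)^{i-1}n$. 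The main obstacle is the reduction step: one must show that the relaxation coming from the bounds on $m_t$ is genuinely optimised by complete multipartite degree sequences and then run the attendant extremal problem; the remaining steps are short, and handling floors, ceilings and $o(n^3)$ error terms in the passage to the continuous problem is routine.
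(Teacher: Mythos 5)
Your setup is sound: the identity $Mo^\star(G)=\sum_{t=0}^{n-1}m_t(G)$ is correct, the bound $m_t\le\frac12\bigl(D_t+\sum_{v\in S_t}\min\{d_G(v),n-p_t\}\bigr)$ is correct and tight for complete multipartite graphs, and your closing computation (the recursion $M=\max_a\bigl(a^2(1-a)+(1-a)^3M\bigr)$, the equation $2(1-a)^2+2(1-a)-1=0$, and the value $M=\frac{2}{\sqrt3}-1$) agrees exactly with the recursion $x(1-x)^2+\bigl(\frac{2}{\sqrt3}-1\bigr)x^3$ appearing at the end of the paper's proof, with $x=1-a$. The problem is the step you yourself flag as ``the main obstacle'': you never show that the degree-sequence functional $\sum_t\frac12\bigl(D_t+\sum_{v\in S_t}\min\{d_G(v),n-p_t\}\bigr)$, maximized over all degree profiles, is attained (up to $o(n^3)$) at the complete multipartite profiles. ``A rearrangement argument should reduce this to maximising over step functions'' is an assertion, not an argument, and it is exactly this optimization that carries the entire difficulty of the theorem; nothing in the proposal rules out that some non-multipartite degree profile makes your relaxation exceed $\bigl(\frac{2}{\sqrt3}-1\bigr)n^3$ (note your relaxation does not even use graphicality of the sequence, so this must be proved for arbitrary profiles). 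As written, this is a reduction of the theorem to an unproved extremal problem of comparable difficulty.

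Two further remarks. First, the ``self-referential identity'' needs care as a proof device: since the choice $a=0$ gives $a^2(1-a)+(1-a)^3M=M$, the inequality $M\le\max_a(\cdots)$ is vacuous and does not pin $M$ down; what one actually needs (and what the paper verifies) is that for the specific constant $c=\frac{2}{\sqrt3}-1$ one has $\max_{x\in[0,1]}\bigl(x(1-x)^2+cx^3\bigr)=c$, which then feeds an induction on the number of parts or on $n$. Second, for comparison: the paper proves the theorem by a structural exchange argument on a graph $G$ chosen extremal for $Mo^\star$ (Claims 1--3), showing that the minimum-degree vertices form an independent set $I$ completely joined to $V(G)\setminus I$, which yields $Mo^\star(G)=\delta(n-\delta)^2+Mo^\star(G-I)$ and closes an induction on $n$. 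That structural step is the paper's substitute for the reduction you are missing; some argument of that strength is unavoidable, and supplying it would essentially reprove the theorem by other means.
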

Theorem \ref{theorem3} is best possible up to terms of lower order,
which means that we cannot prove Conjecture 20 from \cite{domasktizu}
by considering only $Mo^\star(G)$:
Starting with a complete bipartite graph whose smaller partite set contains about a 
$\gamma=\frac{\sqrt{3}-1}{2}\approx 0.366$ fraction of all vertices,
and 
recursively inserting in that smaller partite set 
a further complete bipartite graph whose smaller partite set contains about a $\gamma$ fraction of its vertices
yields a recursive construction of a graph $G$ of order $n$ with 
$Mo^\star(G)=\left(\frac{2}{\sqrt{3}}-1\right)(1-o(1))n^3$.
Note that the complement of the constructed graph 
is the disjoint union of cliques of approximate orders 
$\gamma^i(1-\gamma)n$ for $i=0,1,2,3,\ldots$.
Inspecting the proof of Theorem \ref{theorem3} reveals that the above recursive
construction of the (approximately) extremal graphs is quite natural for $Mo^\star$,
which is a rather unusual and mathematically pleasing feature of this new parameter.

Our second main result relies on a linear programming approach 
that we introduced in \cite{miparawe},
where we determined essentially best possible upper bounds 
on the Mostar index of bipartite graphs and split graphs.

\begin{theorem}\label{theorem1}
If $G$ is a graph of order $n$ and maximum degree $\Delta$, then 
$$Mo^\star(G)\leq 
\left(2\left(\frac{\Delta}{n}\right)^2+\left(\frac{\Delta}{n}\right)-2\left(\frac{\Delta}{n}\right)\sqrt{\left(\frac{\Delta}{n}\right)^2+\left(\frac{\Delta}{n}\right)}\right)n^3.$$
\end{theorem}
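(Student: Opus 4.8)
The plan is to pass from $G$ to its degree sequence by an edge orientation, to encode the resulting inequalities as a linear program, and then to solve that program; the closed‑form bound will drop out of the optimal solution of the program.

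First I would orient each edge of $G$ from its endpoint of larger degree to its endpoint of smaller degree, breaking ties by a fixed labelling of $V(G)$, and write $d^-(v)$ for the resulting in‑degree of $v$. If $uv\in E(G)$ with $d_G(u)\ge d_G(v)$, then $n-\min\{d_G(u),d_G(v)\}=n-d_G(v)$ and this edge is an in‑edge at $v$; summing over all edges yields
$$Mo^\star(G)=\sum_{v\in V(G)}d^-(v)\big(n-d_G(v)\big).$$
Enumerate $V(G)=\{v_1,\dots,v_n\}$ with $d_G(v_1)\ge\cdots\ge d_G(v_n)$, the labelling being compatible with the tie‑breaking above, so that every in‑neighbour of $v_i$ occurs among $v_1,\dots,v_{i-1}$. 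Then the in‑edges incident with $v_1,\dots,v_k$ are exactly the edges of $G[\{v_1,\dots,v_k\}]$, so $\sum_{i\le k}d^-(v_i)=e\big(G[\{v_1,\dots,v_k\}]\big)$, and consequently, for every $k$,
$$0\le d^-(v_i)\le\min\{d_G(v_i),\,i-1\},\qquad \sum_{i\le k}d^-(v_i)\le\binom{k}{2},\qquad \sum_{i\le k}d^-(v_i)\le\tfrac12\sum_{i\le k}d_G(v_i),$$
with equality in the last one for $k=n$; moreover $d_G(v_i)\le\Delta$ throughout.

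Next I would relax and pass to the continuous limit: regarding the normalised sequences $\delta(x)=d_G(v_{\lceil xn\rceil})/n$ and $\iota(x)=d^-(v_{\lceil xn\rceil})/n$ as functions on $[0,1]$, and writing $\rho=\Delta/n$, the above becomes the optimization problem of maximising $\int_0^1\iota(x)\big(1-\delta(x)\big)\,dx$ over a non‑increasing $\delta\colon[0,1]\to[0,\rho]$ and a $\iota\colon[0,1]\to[0,1]$ subject to $0\le\iota(x)\le\min\{\delta(x),x\}$, $\int_0^x\iota\le\tfrac12\int_0^x\delta$ for all $x$, and $\int_0^1\iota=\tfrac12\int_0^1\delta$. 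Up to an additive $o(1)$ the optimum of this program bounds $Mo^\star(G)/n^3$ from above, and it is exactly of the shape treated by the linear programming method of \cite{miparawe}. I would then solve the program by linear programming duality: attach one multiplier to the handshake equality and non‑negative multipliers to the prefix inequalities and to the box constraints $\iota\le\delta$, $\iota\le x$, and choose them so that the Lagrangian is pointwise non‑positive. Guided by \cite{miparawe} and by the extremal construction behind Theorem \ref{theorem3}, the optimal pair $(\delta,\iota)$ should be a step function whose blocks decrease roughly geometrically, the last block being enlarged so that $\delta$ stays at the value $\rho$; the remaining free parameters are pinned by stationarity, which reduces to a single quadratic equation in $\rho$ whose roots carry the factor $\sqrt{\rho^2+\rho}$. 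Substituting the optimal profile back in then gives the program value
$$\big(\sqrt{\rho^2+\rho}-\rho\big)^2=2\Big(\tfrac{\Delta}{n}\Big)^2+\tfrac{\Delta}{n}-2\,\tfrac{\Delta}{n}\sqrt{\Big(\tfrac{\Delta}{n}\Big)^2+\tfrac{\Delta}{n}},$$
which is the claimed coefficient.

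The main obstacle will be the last step: identifying the right relaxation in the first place — rich enough to be tight for the extremal graphs, yet still a linear program one can solve in closed form (in particular, the prefix‑handshake inequalities $\int_0^x\iota\le\tfrac12\int_0^x\delta$ are what let one improve on the elementary counting of \cite{gets})—and then exhibiting an explicit dual certificate that meets the candidate optimum exactly. A secondary but real issue is controlling the $O(1/n)$ rounding and discretisation errors incurred when translating between the graph and the continuous program, so that the inequality holds with the stated $n^3$ coefficient for every $n$ rather than merely asymptotically.
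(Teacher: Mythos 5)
Your setup is sound and close in spirit to the paper's: the identity $Mo^\star(G)=\sum_{v}d^-(v)\bigl(n-d_G(v)\bigr)$ obtained from the degree-decreasing orientation is correct, as are the prefix constraints you derive, and encoding everything as a linear program to be certified by a dual solution is exactly the right strategy. But the proof has a genuine gap precisely where you yourself locate "the main obstacle": the linear program is never actually solved. You assert that the optimal profile is a step function with geometrically decreasing blocks, that stationarity reduces to a quadratic in $\rho$ "whose roots carry the factor $\sqrt{\rho^2+\rho}$", and that substituting back yields $\bigl(\sqrt{\rho^2+\rho}-\rho\bigr)^2$ — but no dual multipliers are exhibited, no feasibility of a dual solution is verified, and no computation backs the claimed optimal value. (Indeed the claimed primal optimizer is suspect: a single complete bipartite graph $K_{(1-\rho)n,\rho n}$ gives only $(1-\rho)\rho^2 n^3$, not $\bigl(\sqrt{\rho^2+\rho}-\rho\bigr)^2 n^3$, and the paper makes no tightness claim for this theorem, so "substituting the optimal profile back in" is not a step you can take on faith.) As written, the argument establishes the correct reduction to an optimization problem and then conjectures its answer.

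For comparison, the paper's linear program is considerably leaner than your relaxation: its only constraints are the per-degree-class handshake identities $2y_{i,i}+\sum_{j<i}y_{j,i}+\sum_{j>i}y_{i,j}=\frac{i}{n}x_i$ together with $\sum_i x_i=1$; none of your prefix inequalities $\sum_{i\le k}d^-(v_i)\le\binom{k}{2}$ or $\le\frac12\sum_{i\le k}d_G(v_i)$ are needed. The whole proof then consists of writing down the explicit dual point $p=p_\Delta$, $q_0=1$, $q_i=\frac{n}{i}p$ for $i\ge 1$, and checking that dual feasibility reduces to $\frac{p}{x}+x+\frac{n}{\Delta}p\ge 1$ for $x\in\bigl(0,\frac{\Delta}{n}\bigr]$, which follows from $\min_{x>0}\bigl(\frac{p}{x}+x\bigr)=2\sqrt{p}$ and the identity $2\sqrt{p_\Delta}+\frac{n}{\Delta}p_\Delta=1$. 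If you want to salvage your route, you must either produce and verify such a dual certificate for your richer program, or simplify to the paper's program where the certificate is a two-line verification; the additional prefix constraints you introduced buy nothing for this particular bound and only make the duality argument harder to close. The discretisation worry you raise at the end is also avoidable: working with the exact counts $x_i n$ and $y_{i,j}n^2$ as the paper does, rather than a continuous limit, makes the bound hold exactly for every $n$ with no $O(1/n)$ error terms to control.
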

The bound in Theorem \ref{theorem1} is increasing in $\frac{\Delta}{n}$
and improves Theorem \ref{theorem3} only for $\Delta$ up to about $0.725n$.
Before we proceed to Section \ref{section2}, 
where we prove our two theorems,
we discuss further results and possible approaches.

It is easy to see that 
$Mo(G)=irr(G)$ for graphs $G$ of order $n$ and diameter at most $2$,
where 
$$irr(G)=\sum\limits_{uv\in E(G)}|d_G(u)-d_G(v)|$$
is the {\it irregularity} introduced by Albertson \cite{al}.
As he showed that $irr(G)\leq \frac{4}{27}n^3$, 
Conjecture 20 from \cite{domasktizu} 
follows immediately for graphs of diameter at most $2$.
This suggests the approach to show that, 
for every order $n$,
some graph $G$ maximizing the Mostar index 
among all graphs of order $n$
has a universal vertex.
This, in turn, suggests considering the effect on the Mostar index 
of adding missing edges to some given graph.
Unfortunately, adding edges can have considerable non-local effects 
on the contribution of individual edges to the Mostar index.
Nevertheless, examples suggest that suitable missing edges 
whose addition to a given graph $G$ 
might have a controllable effect on the Mostar index of $G$
can be identified using the following partial orientation of $G$:
$$\mbox{\it For every edge $uv$ of $G$ with $n_G(u,v)>n_G(v,u)$, orient $uv$ from $v$ to $u$.}$$
This orientation is {\it acyclic} in the following sense:
As observed in \cite{aoha,dogu}, 
we have $n_G(u,v)-n_G(v,u)=\sigma_G(v)-\sigma_G(u)$
for every edge $uv$ of $G$, where 
$\sigma_G(x)=\sum\limits_{y\in V(G)}{\rm dist}_G(x,y)$,
and ${\rm dist}_G(x,y)$ denotes the distance in $G$ between the vertices $x$ and $y$.
Now, if $C:u_1u_2\ldots u_\ell u_1$ is a cycle in $G$
such that
the edge $u_\ell u_1$ is oriented from $u_\ell$ to $u_1$ and,
for every $i\in [\ell-1]$,  
the edge $u_iu_{i+1}$ is either oriented from $u_i$ to $u_{i+1}$ or is not oriented at all, then 
$\sum\limits_{uv\in E(C)}|n_G(u,v)-n_G(v,u)|$ should be strictly positive, yet
\begin{eqnarray*}
\sum\limits_{uv\in E(C)}|n_G(u,v)-n_G(v,u)| & = &  
\sum\limits_{i=1}^{\ell-1}\Big(n_G(u_{i+1},u_i)-n_G(u_i,u_{i+1})\Big)+\Big(n_G(u_1,u_\ell)-n_G(u_\ell,u_1)\Big)\\
&=&\sum\limits_{i=1}^{\ell-1}\Big(\sigma_G(u_i)-\sigma_G(u_{i+1})\Big)+\Big(\sigma_G(u_{\ell})-\sigma_G(u_1)\Big)=0,
\end{eqnarray*}
that is, no such cycle exists in $G$.
We believe that adding missing edges 
between vertices of zero outdegree and vertices of zero indegree 
in this partial orientation might have a controllable effect on the Mostar index of $G$.
Unfortunately, we have not been able to quantify this intuition sufficiently well.

The approach of Geneson and Tsai \cite{gets} 
actually allows to obtain upper bounds on $Mo^\star(G)$
depending on the degree sequence of $G$:
Let the graph $G$ have $n$ vertices, $m$ edges, 
and vertex degrees $d_1\leq d_2\leq \ldots \leq d_n$.
Let $V(G)=\{ u_1,\ldots,u_n\}$ be such that $d_G(u_i)=d_i$ 
for every $i\in \{ 1,2,\ldots,n\}$.
Furthermore, for every such $i$, 
let $e_i$ be the number of neighbors of $u_i$ in $\{ u_{i+1},\ldots,u_n\}$.
Now, $m=\sum\limits_{i=1}^ne_i$, and 
$Mo^\star(G)=\sum\limits_{i=1}^ne_i(n-d_i)
=nm-\sum\limits_{i=1}^ne_id_i.$
Clearly, for every $i$, we have 
$$\max\{ 0,d_i-i+1\}=:e_i^-\leq e_i\leq e_i^+:=\min\{ d_i,n-i\},$$
which easily implies
$\sum\limits_{i=1}^ne_id_i\geq 
s:=\sum\limits_{i=1}^{k-1}e^+_id_i+\sum\limits_{i=k}^ne^-_id_i$,
where $k$ is the smallest integer with 
$m\leq \sum\limits_{i=1}^ke^+_i+\sum\limits_{i=k+1}^ne^-_i$.
Altogether, we obtain 
$Mo^\star(G)\leq nm-s$,
where $n$, $m$, and $s$ only depend on the degree sequence of $G$.
The Mostar index of trees of given degree sequences has been studied in \cite{DL2}.

\section{Proofs of Theorems \ref{theorem3} and \ref{theorem1}}\label{section2}

In the present section we prove our two main results.

\begin{proof}[Proof of Theorem \ref{theorem3}]
The proof is by induction in $n$.
For $n=1$, the graph $G$ has no edge, $Mo^\star(G)=0$, and the statement is trivial.
Now, let $n>1$, and let the graph $G$ of order $n$ be chosen in such a way that
\begin{enumerate}[(i)]
\item $Mo^\star(G)$ is as large as possible,
\item subject to (i), the graph $G$ has as many edges as possible, and
\item subject to (i) and (ii), 
the term $\sum\limits_{u\in V(G)}d^2_G(u)$ is as large as possible.
\end{enumerate}
For a linear ordering $\pi:u_1,u_2,\ldots,u_n$ of the vertices of $G$, 
an edge $u_iu_j$ with $i<j$ is called a {\it forward edge at $u_i$}.
For $i\in [n]$, let $d^+_i$ be the number of forward edges at $u_i$.
Note that $d_i^+$ depends on the specific choice of $\pi$.

Now, choose $\pi:u_1,u_2,\ldots,u_n$ such that 
\begin{enumerate}
\item[(iv)] $d_G(u_1)\leq d_G(u_2)\leq \ldots \leq d_G(u_n)$, and 
\item[(v)] subject to (iv), 
the term $w(\pi)=\sum\limits_{i=1}^n (n-i)d^+_i$ is as large as possible.
\end{enumerate}

\begin{claim}\label{claim1}
If $d_G(u_i)=d_G(u_{i+1})$ for some $i\in [n-1]$, 
then $d^+_i\geq d^+_{i+1}$.
\end{claim}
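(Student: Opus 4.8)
The plan is to argue by exchange. Suppose $d_G(u_i)=d_G(u_{i+1})=:d$ but $d_i^+<d_{i+1}^+$. Let $\pi'$ be the ordering obtained from $\pi$ by swapping $u_i$ and $u_{i+1}$; since these two vertices have equal degree, $\pi'$ still satisfies (iv). I would then compare $w(\pi')$ with $w(\pi)$ and show $w(\pi')>w(\pi)$, contradicting the maximality in (v). For every vertex $u_k$ with $k\notin\{i,i+1\}$, the set of forward edges at $u_k$ is unchanged by the swap except possibly for edges from $u_k$ into $\{u_i,u_{i+1}\}$, and even those contribute the same total since $\{u_i,u_{i+1}\}$ occupy the same pair of positions; moreover the coefficient $(n-k)$ attached to $u_k$ is unchanged. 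So only the terms indexed by positions $i$ and $i+1$ can change.

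The core computation is therefore local: I would let $a$ be the number of common neighbors of $u_i$ and $u_{i+1}$ lying in $\{u_{i+2},\dots,u_n\}$, and carefully track the edge $u_iu_{i+1}$ itself if it is present, and the neighbors of $u_i$ (resp. $u_{i+1}$) lying strictly after position $i+1$. After the swap, the old $u_{i+1}$ (now in position $i$) has $d_{i+1}^+$ forward edges to vertices after position $i+1$ plus possibly the edge to $u_i$, now pointing forward from position $i$ to position $i+1$; symmetrically for the old $u_i$. Writing everything out, the change $w(\pi')-w(\pi)$ reduces to $(d_{i+1}^+-d_i^+)$ times a positive quantity (essentially the difference of the two positional weights $(n-i)-(n-i-1)=1$, up to the bookkeeping for the edge $u_iu_{i+1}$), so $d_{i+1}^+>d_i^+$ forces $w(\pi')>w(\pi)$ — contradiction. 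Hence $d_i^+\ge d_{i+1}^+$.

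The main obstacle is the bookkeeping around the edge $u_iu_{i+1}$ when it is present: this edge is a forward edge at $u_i$ under $\pi$ (contributing to $d_i^+$ with weight $n-i$) and becomes a forward edge at the vertex now in position $i$ under $\pi'$, so it must be excluded from the "neighbors after position $i+1$" counts on both sides and handled separately; getting the resulting constant-order correction terms to cancel correctly (and checking they do not swamp the $(d_{i+1}^+-d_i^+)$ term) is where care is needed. A clean way to organize this is to split each of $d_i^+,d_{i+1}^+$ as (edge $u_iu_{i+1}$, counted $0$ or $1$) plus (forward edges to $\{u_{i+2},\dots,u_n\}$), feed both parts through the swap, and observe that the $u_iu_{i+1}$ part contributes the same weight $1$ to both $\pi$ and $\pi'$ while the remaining parts swap their weights $n-i$ and $n-i-1$; the net effect is exactly $(d_{i+1}^+-d_i^+)\cdot\big((n-i)-(n-i-1)\big)\ge$ the claimed positive amount. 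This yields the contradiction and completes the proof.
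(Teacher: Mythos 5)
Your proposal is correct and follows essentially the same route as the paper: swap $u_i$ and $u_{i+1}$ (legitimate under (iv) since the degrees are equal), observe that only the terms at positions $i$ and $i+1$ change, and compute $w(\pi')-w(\pi)=d_{i+1}^+-d_i^+$ (plus an extra $+1$ when $u_iu_{i+1}\in E(G)$, since that edge keeps weight $n-i$ in both orderings), contradicting (v). The only cosmetic slip is calling the weight of the edge $u_iu_{i+1}$ ``$1$'' rather than $n-i$; the substantive point, that its contribution is identical under $\pi$ and $\pi'$, is what matters and is correctly used.
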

\begin{proof}[Proof of Claim \ref{claim1}]
Suppose, for a contradiction, that $d^+_i<d^+_{i+1}$.
Let the linear ordering $\pi'$ arise from $\pi$ by exchanging $u_i$ and $u_{i+1}$.
If $u_i$ and $u_{i+1}$ are not adjacent, then 
\begin{eqnarray*}
w(\pi')-w(\pi)&=& 
(n-i)d^+_{i+1}+(n-i-1)d^+_i
-(n-i)d^+_{i}-(n-i-1)d^+_{i+1}\\
&=&d^+_{i+1}-d^+_i>0,
\end{eqnarray*}
and, if $u_i$ and $u_{i+1}$ are adjacent, then 
\begin{eqnarray*}
w(\pi')-w(\pi)&=& 
(n-i)(d^+_{i+1}+1)+(n-i-1)(d^+_i-1)
-(n-i)d^+_{i}-(n-i-1)d^+_{i+1}\\
&=&d^+_{i+1}-d^+_i+1>0.
\end{eqnarray*}
Since $\pi'$ satisfies (iv), 
we obtain a contradiction to condition (v) in the choice of $\pi$.
\end{proof}

\begin{claim}\label{claim2}
If $d_G(u_i)<d_G(u_{i+1})$ for some $i\in [n-1]$, 
then $d^+_i\geq d^+_{i+1}$.
\end{claim}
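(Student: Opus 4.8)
The plan is to prove Claim~\ref{claim2} by the same exchange argument used for Claim~\ref{claim1}, but now exploiting condition~(iii) rather than condition~(v). Suppose, for a contradiction, that $d^+_i<d^+_{i+1}$ for some $i\in[n-1]$ with $d_G(u_i)<d_G(u_{i+1})$. Note that the ordering obtained by swapping $u_i$ and $u_{i+1}$ need not satisfy~(iv), so we cannot directly invoke~(v); instead we will modify the graph $G$ itself and contradict the choice of $G$. First I would observe that, since $d^+_i$ counts forward edges at $u_i$ into $\{u_{i+1},\dots,u_n\}$ and $d^+_{i+1}$ counts forward edges at $u_{i+1}$ into $\{u_{i+2},\dots,u_n\}$, the inequality $d^+_i<d^+_{i+1}$ together with $d_G(u_i)<d_G(u_{i+1})$ forces the existence of a vertex $u_j$ with $j\ge i+2$ that is adjacent to $u_{i+1}$ but not to $u_i$; moreover $u_i$ must have a neighbour $u_h$ with $h\le i$ (more precisely $h<i$, or $h=i$ meaning the edge $u_iu_{i+1}$, contributing to $d_G(u_i)$ but not to $d^+_i$) — I would count degrees carefully to locate an edge at $u_i$ that is ``backward'' and can be rerouted.

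The key step is then a local graph modification: delete an appropriate edge incident with $u_i$ and add the edge $u_iu_j$, where $u_j$ is a neighbour of $u_{i+1}$ of higher index not adjacent to $u_i$. One must check that this modification (a) does not decrease $Mo^\star$, (b) does not decrease the number of edges, and (c) strictly increases $\sum_{u}d^2_G(u)$, yielding a contradiction with~(i)--(iii); or, alternatively, that it keeps all of (i)--(iii) unchanged but can be iterated to reach a configuration contradicting~(v) via Claim~\ref{claim1}. For part~(a) the relevant identity is $Mo^\star(G)=nm-\sum_{i=1}^n e_id_i$ from the introduction (with $e_i$ the number of higher-indexed neighbours of $u_i$): since $d_G(u_i)<d_G(u_{i+1})$, shifting a neighbour from a low-degree endpoint towards arranging edges among higher-degree vertices tends to decrease $\sum e_id_i$, hence increase $Mo^\star$. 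The cleanest route is probably to show that if $d^+_i<d^+_{i+1}$ one can re-sort so that $u_i$ and $u_{i+1}$ effectively swap roles while preserving the degree order (because their degrees differ, we would actually need to also adjust $G$), producing a graph $G'$ with $Mo^\star(G')\ge Mo^\star(G)$ and strictly larger $\sum_u d_G^2(u)$.

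The hard part will be handling the interaction between the degree-monotonicity constraint~(iv) and the graph-modification: unlike Claim~\ref{claim1}, where $u_i$ and $u_{i+1}$ have equal degree so a pure permutation works, here any edge swap that changes $d^+$ values also changes degrees, so one genuinely has to change $G$ and re-verify optimality conditions~(i)--(iii). I expect the argument to hinge on a careful case analysis according to whether $u_i\sim u_{i+1}$ and according to how many common higher-indexed neighbours $u_i$ and $u_{i+1}$ share, always extracting a neighbour $u_j$ of $u_{i+1}$ with $j>i+1$ and $u_j\not\sim u_i$, deleting an edge $u_iu_h$ with $h\le i$ and adding $u_iu_j$. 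After the modification one re-computes $Mo^\star$ using the sum-of-distances reformulation or directly via $nm-\sum e_id_i$, argues the net change is nonnegative, checks edge count is unchanged, and notes that replacing a neighbour of index $h\le i$ by one of index $j>i+1$ — combined with $d_G(u_i)<d_G(u_{i+1})$ — strictly increases $\sum_u d_G^2(u)$ once one also accounts for the degree changes at $u_h$ and $u_j$; this final monotonicity check is the delicate computational core and the place where the strict inequality $d_G(u_i)<d_G(u_{i+1})$ is essential.
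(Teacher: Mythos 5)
Your proposal is a plan rather than a proof: the decisive computations are all deferred (``I would count degrees carefully'', ``the delicate computational core''), and the specific modification you propose has a concrete flaw. You want to delete an edge $u_iu_h$ with $h\le i$ and add $u_iu_j$; but $u_i$ need not have any backward neighbour at all (for instance $i=1$, or more generally $d_G(u_i)=d^+_i$ is entirely possible), and your fallback ``$h=i$ meaning the edge $u_iu_{i+1}$'' is based on a miscount --- $u_iu_{i+1}$ is a forward edge at $u_i$ and does contribute to $d^+_i$. Even when $u_h$ exists, the claim that $Mo^\star$ does not decrease under your swap is exactly the nontrivial point: the new edge $u_iu_j$ contributes $n-d_G(u_i)\le n-d_G(u_h)$, i.e.\ less than the deleted edge, and one must offset this against second-order effects at $u_h$ and $u_j$; ``tends to decrease $\sum e_id_i$'' does not establish this.

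The paper's argument uses a different and more robust modification: it moves the edge $u_{i+1}u_j$ (whose existence with $u_j\not\sim u_i$ follows from $d^+_i<d^+_{i+1}$ alone) to $u_iu_j$, so only the degrees of $u_i$ and $u_{i+1}$ change, and the bookkeeping closes up to give $Mo^\star(G')-Mo^\star(G)\ge (d_G(u_{i+1})-d_G(u_i)-1)+(d^+_{i+1}-d^+_i-1)\ge 0$. Optimality of $G$ then forces both parentheses to vanish, and --- this is the step entirely missing from your proposal --- the proof concludes by comparing $G$ separately with $G^+=G+u_iu_j$ and $G^-=G-u_{i+1}u_j$: the resulting two inequalities sandwich the same quantity $n-d_G(u_i)-1-d^+_i-d^+_j$, and conditions (i) and (ii) (maximality of $Mo^\star$, then of the edge count) yield $0>Mo^\star(G^+)-Mo^\star(G)\ge Mo^\star(G)-Mo^\star(G^-)\ge 0$, a contradiction. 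Condition (iii), which you lean on, is not used here at all; it is reserved for Claim 3. To repair your write-up you would need to (a) drop the assumption of a backward edge at $u_i$, (b) actually perform the contribution-by-contribution estimate, and (c) supply a mechanism, such as the $G^+$/$G^-$ comparison, for turning the resulting tight inequalities into a contradiction.
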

\begin{proof}[Proof of Claim \ref{claim2}]
Suppose, for a contradiction, that $d^+_i<d^+_{i+1}$.
This implies the existence of a forward edge $u_{i+1}u_j$ at $u_{i+1}$
for which $u_i$ is not adjacent to $u_j$.

Let $G'=G-u_{i+1}u_j+u_iu_j$.

In order to lower bound $Mo^\star(G')-Mo^\star(G)$, 
we consider the contributions of the different edges.
\begin{itemize}
\item The edge $u_{i+1}u_j$ of $G$ contributes $n-d_G(u_{i+1})$ to $Mo^\star(G)$.
\item The edge $u_iu_j$ of $G'$ contributes $n-(d_G(u_i)+1)$ to $Mo^\star(G')$.
\item Each of the $d_{i+1}^+-1$ forward edges of $G$ at $u_{i+1}$ 
that are distinct from $u_{i+1}u_j$
contributes one more to $Mo^\star(G')$ than to $Mo^\star(G)$.
\item Each of the $d_i^+$ forward edges of $G$ at $u_i$ 
contributes at most one less to $Mo^\star(G')$ than to $Mo^\star(G)$.
Note that, if $u_i$ and $u_{i+1}$ are adjacent, 
then the edge between them is one of these forward edges.
\item All remaining edges contribute at least as much to $Mo^\star(G')$ as to $Mo^\star(G)$.
\end{itemize}
Since $Mo^\star(G')\leq Mo^\star(G)$ by the choice of $G$, these observations imply
\begin{eqnarray*}
0 &\geq & Mo^\star(G')-Mo^\star(G) \\
& \geq &  
-(n-d_G(u_{i+1}))
+(n-(d_G(u_i)+1))
+(d_{i+1}^+-1)
-d_i^+\\
& = & (\underbrace{d_G(u_{i+1})-d_G(u_i)-1}_{\geq 0})+(\underbrace{d_{i+1}^+-d_i^+-1}_{\geq 0})\geq 0,
\end{eqnarray*}
and, hence,
\begin{eqnarray}
d_G(u_{i+1})&=&d_G(u_i)+1\label{ef0a}\mbox{ and }\\
d_{i+1}^+&=&d_i^++1.\label{ef0b}
\end{eqnarray}
If $u_i$ and $u_{i+1}$ are adjacent, then, by (\ref{ef0a}), 
the forward edge $u_iu_{i+1}$ at $u_i$
contributes the same to $Mo^\star(G')$ as to $Mo^\star(G)$,
which implies the contradiction $Mo^\star(G')-Mo^\star(G)>0$.
Hence, the vertices $u_i$ and $u_{i+1}$ are not adjacent.

Let $G^+=G+u_iu_j$ and $G^-=G-u_{i+1}u_j$.

In order to lower bound $Mo^\star(G^+)-Mo^\star(G)$, 
we consider the contributions of the different edges.
\begin{itemize}
\item The edge $u_iu_j$ of $G^+$ contributes $n-(d_G(u_i)+1)$ to $Mo^\star(G^+)$.
\item Each of the $d_i^+$ forward edges of $G$ at $u_i$ 
contributes one less to $Mo^\star(G^+)$ than to $Mo^\star(G)$.
\item Each of the $d_j^+$ forward edges of $G$ at $u_j$ 
contributes at most one less to $Mo^\star(G^+)$ than to $Mo^\star(G)$.
\item All remaining edges contribute at least as much to $Mo^\star(G^+)$ as to $Mo^\star(G)$.
\end{itemize}
Together, these observations imply
\begin{eqnarray}\label{ef1}
Mo^\star(G^+)-Mo^\star(G) & \geq & n-d_G(u_i)-1-d_i^+-d_j^+.
\end{eqnarray}
In order to upper bound  $Mo^\star(G)-Mo^\star(G^-)$, 
we consider the contributions of the different edges.
\begin{itemize}
\item The edge $u_{i+1}u_j$ of $G$ contributes $n-d_G(u_{i+1})\stackrel{(\ref{ef0a})}{=}n-(d_G(u_i)+1)$ to $Mo^\star(G)$.
\item Each of the $d_{i+1}^+-1\stackrel{(\ref{ef0b})}{=}d_i^+$ forward edges of $G$ at $u_{i+1}$
that are distinct from $u_{i+1}u_j$
contributes one less to $Mo^\star(G)$ than to $Mo^\star(G^-)$.
\item Each of the $d_j^+$ forward edges of $G$ at $u_j$ 
contributes one less to $Mo^\star(G)$ than to $Mo^\star(G^-)$.
\item All remaining edges contribute at most as much to $Mo^\star(G)$ as to $Mo^\star(G^-)$.
\end{itemize}
Together, these observations imply
\begin{eqnarray}\label{ef2}
Mo^\star(G)-Mo^\star(G^-) & \leq & n-d_G(u_i)-1-d_i^+-d_j^+.
\end{eqnarray}
Combining (\ref{ef1}) and (\ref{ef2}), and using the specific choice of $G$, 
we obtain the contradiction
$$0>Mo^\star(G^+)-Mo^\star(G)\geq Mo^\star(G)-Mo^\star(G^-)\geq 0.$$
\end{proof}
By Claims \ref{claim1} and \ref{claim2}, we have
$d_1^+\geq d_2^+\geq \ldots \geq d_n^+$.

\begin{claim}\label{claim3}
If $u_i$ and $u_j$ are adjacent for some $1\leq i<j\leq n-1$, 
then $u_i$ is adjacent to $u_j,u_{j+1},\ldots,u_n$.
\end{claim}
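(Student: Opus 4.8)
\emph{Proof plan.} The plan is to argue by contradiction via a single edge exchange, drawing the contradiction from the third extremality condition~(iii) on $G$. Suppose $u_iu_j\in E(G)$ for some $1\le i<j\le n-1$ but $u_iu_k\notin E(G)$ for some $j<k\le n$. First I would reduce to the case $j=k-1$: keeping $i$ fixed, take $k$ minimal such that $u_iu_k\notin E(G)$ while $u_i$ still has a neighbour $u_{j'}$ with $i<j'<k$, and then take $j$ maximal with $i<j<k$ and $u_iu_j\in E(G)$. If $j<k-1$, then $u_iu_{j+1}\notin E(G)$ by the choice of $j$, and the pair $(j,j+1)$ contradicts the choice of $k$; hence $j=k-1$, and we may assume $u_iu_{k-1}\in E(G)$, $u_iu_k\notin E(G)$, and $i<k-1\le n-1$.

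Now set $G'=G-u_iu_{k-1}+u_iu_k$, which has the same order and the same number of edges as $G$. Since $d_G(u_{k-1})\le d_G(u_k)$ by~(iv), a short computation gives $\sum_{u\in V(G)}d^2_{G'}(u)=\sum_{u\in V(G)}d^2_G(u)+2\big(d_G(u_k)-d_G(u_{k-1})\big)+2>\sum_{u\in V(G)}d^2_G(u)$. Hence, as soon as we show $Mo^\star(G')\ge Mo^\star(G)$, condition~(i) forces $Mo^\star(G')=Mo^\star(G)$, so $G'$ satisfies (i) and (ii) yet has a strictly larger degree-square sum, contradicting~(iii). Everything thus reduces to verifying $Mo^\star(G')\ge Mo^\star(G)$.

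For this I would compare the contributions of the individual edges to $Mo^\star$. As $d_{G'}(u_i)=d_G(u_i)\le d_G(u_{k-1})\le d_G(u_k)$, both the deleted edge $u_iu_{k-1}$ and the new edge $u_iu_k$ contribute $n-d_G(u_i)$, so they cancel. Lowering $d_G(u_{k-1})$ by one weakly increases the contribution of every remaining edge at $u_{k-1}$; in particular each of the $d^+_{k-1}-\varepsilon$ forward edges at $u_{k-1}$ with other endpoint in $\{u_{k+1},\dots,u_n\}$ gains exactly one, where $\varepsilon=1$ if $u_{k-1}u_k\in E(G)$ and $\varepsilon=0$ otherwise, and when $\varepsilon=1$ the edge $u_{k-1}u_k$ itself gains one as well. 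Raising $d_G(u_k)$ by one weakly decreases the contribution of the old edges at $u_k$, strictly only for those whose other endpoint has strictly larger degree than $u_k$; all such endpoints lie in $\{u_{k+1},\dots,u_n\}$, so at most $d^+_k$ edges each lose one, and the edge $u_{k-1}u_k$ (already counted above) is not among them. All other edges are unaffected. Summing and using $d^+_{k-1}\ge d^+_k$ from Claims~\ref{claim1} and~\ref{claim2}, we get $Mo^\star(G')-Mo^\star(G)\ge(d^+_{k-1}-\varepsilon)+\varepsilon-d^+_k=d^+_{k-1}-d^+_k\ge 0$, as needed.

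I expect the bookkeeping in the last paragraph to be the only real obstacle, in particular the case $u_{k-1}u_k\in E(G)$: there the naive count of the gains at $u_{k-1}$ is short by one because the edge $u_{k-1}u_k$ is no longer an available forward edge, and the deficit is recovered exactly from the gain of $u_{k-1}u_k$ once $d_G(u_{k-1})$ drops. This is also why the reduction in the first paragraph is essential: placing the missing edge between the \emph{consecutive} vertices $u_{k-1}$ and $u_k$ is what makes the forward-degree monotonicity $d^+_{k-1}\ge d^+_k$ directly applicable.
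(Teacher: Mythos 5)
Your proposal is correct and follows essentially the same route as the paper: the same edge exchange $G'=G-u_iu_j+u_iu_{j+1}$ between consecutive non-neighbours, the same comparison of edge contributions using the forward-degree monotonicity $d^+_j\geq d^+_{j+1}$ from Claims \ref{claim1} and \ref{claim2} to get $Mo^\star(G')\geq Mo^\star(G)$, and the same contradiction with condition (iii) via the strictly increased degree-square sum. Your opening reduction to consecutive indices and the explicit $\varepsilon$-bookkeeping for the edge $u_{j}u_{j+1}$ only spell out details the paper leaves implicit.
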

\begin{proof}[Proof of Claim \ref{claim3}]
Suppose, for a contradiction, that $u_i$ is adjacent to $u_j$ but not to $u_{j+1}$ 
for some $1\leq i<j\leq n-1$.

Let $G'=G-u_iu_j+u_iu_{j+1}$.

In order to lower bound $Mo^\star(G')-Mo^\star(G)$, 
we consider the contributions of the different edges.
\begin{itemize}
\item Each of the $d_j^+$ forward edges of $G$ at $u_j$ 
contributes one more to $Mo^\star(G')$ than to $Mo^\star(G)$.
\item Each of the $d_{j+1}^+$ forward edges of $G$ at $u_{j+1}$ 
contributes at most one less to $Mo^\star(G')$ than to $Mo^\star(G)$.
\item All remaining edges contribute at least as much to $Mo^\star(G')$ as to $Mo^\star(G)$.
\end{itemize}
Since $Mo^\star(G')\leq Mo^\star(G)$ by the choice of $G$, these observations imply
\begin{eqnarray*}
0 &\geq & Mo^\star(G')-Mo^\star(G)\geq d_j^+-d_{j+1}^+\geq 0,
\end{eqnarray*}
that is, we have $Mo^\star(G')=Mo^\star(G)$.
Note that $G'$ has the same number of edges as $G$ 
but that 
\begin{eqnarray*}
\sum\limits_{u\in V(G)}d^2_{G'}(u)-\sum\limits_{u\in V(G)}d^2_G(u)
&=&(d_G(u_j)-1)^2+(d_G(u_{j+1})+1)^2-d_G(u_j)^2-d_G(u_{j+1})^2\\
&=& 2(d_G(u_{j+1})-d_G(u_j))+2>0,
\end{eqnarray*}
which implies a contradiction to condition (iii) in the choice of $G$.
\end{proof}
Let $\delta=d_G(u_1)$.
By Claim \ref{claim3},
the neighborhood of $u_1$ in $G$ is $V(G)\setminus I$, where $I=\{ u_1,\ldots,u_{n-\delta}\}$.
If $u_iu_j$ is an edge of $G$ for $1\leq i<j\leq n-\delta$,
then Claim \ref{claim3} implies
$d_i^+\geq n-j+1\geq \delta+1$, which implies the contradiction
$d_G(u_1)=d_1^+\geq d_i^+\geq \delta+1$.
Hence, the set $I$ is independent.
Since each vertex in $I$ has degree at least $\delta$,
and $V\setminus I$ contains exactly $\delta$ vertices,
it follows that each vertex in $I$ has degree exactly $\delta$,
and that there are all possible $\delta(n-\delta)$ edges in $G$ between $I$ and $V\setminus I$.

Let $H=G-I$ and $x=\frac{\delta}{n}$.

Using 
$$\delta-\min\{ d_H(u),d_H(v)\}=n-\min\{ d_G(u),d_G(v)\}\mbox{ for every edge $uv$ of $H$,}$$  
induction, and
$$\max\left\{x(1-x)^2+\left(\frac{2}{\sqrt{3}}-1\right)x^3:x\in [0,1]\right\}=\frac{2}{\sqrt{3}}-1,$$
we obtain 
\begin{eqnarray*}
Mo^\star(G) &=& \delta(n-\delta)^2+Mo^\star(H)\\
&\leq &\delta(n-\delta)^2+\left(\frac{2}{\sqrt{3}}-1\right)\delta^3\\
&= &\left(x(1-x)^2+\left(\frac{2}{\sqrt{3}}-1\right)x^3\right)n^3\\
& \leq & \left(\frac{2}{\sqrt{3}}-1\right)n^3,
\end{eqnarray*}
which completes the proof.
\end{proof}
We proceed to the proof of Theorem \ref{theorem1}.
Let $G$ be a graph of order $n$ and maximum degree at most $\Delta$
for positive integers $n$ and $\Delta$ with $\Delta\leq n-1$.

Let $I=\{ 0,1,\ldots,\Delta\}$, and let $G$ have 
\begin{itemize} 
\item $x_i n$ vertices of degree $i$ for every $i\in I$, and
\item $y_{i,j} n^2$ edges between vertices of degree $i$ and vertices of degree $j$ for every $i,j\in I$ with $i\leq j$.
\end{itemize} 
Double-counting the edges incident with vertices of degree $i$ in $G$ implies
$$i x_i n=2y_{i,i}n^2+\sum\limits_{j\in I:j<i}y_{j,i}n^2+\sum\limits_{j\in I:i<j}y_{i,j}n^2.$$
We obtain
\begin{eqnarray}\label{e2}
Mo^\star(G)&\leq & \sum\limits_{i,j\in I:i\leq j}\left(n-i\right)y_{i,j}n^2 \leq {\rm OPT}(P)n^3,
\end{eqnarray}
where ${\rm OPT}(P)$ denotes the optimal value 
of the following linear programm $(P)$:
$$\begin{array}{rrrcll}
& \max  & \sum\limits_{i,j\in I:i\leq j}\left(1-\frac{i}{n}\right)y_{i,j}&&&\\
(P) \,\,\,\,\,\,\,\,\, & s.th.  & \sum\limits_{i\in I}x_i&=&1,&\\
&   & 2y_{i,i}+\sum\limits_{j\in I:j<i}y_{j,i}+\sum\limits_{j\in I:i<j}y_{i,j}-\frac{i}{n}x_i &=&0&
\mbox{ for every $i\in I$, and}\\
&   & x_i,y_{i,j} &\geq &0& \mbox{ for every $i,j\in I$ with $i\leq j$}.
\end{array}$$
The dual of $(P)$ is the following linear programm $(D)$:
$$
\begin{array}{rrrcll}
& \min & p &  &  &\\
(D) \,\,\,\,\,\,\,\,\, & s.th. & q_i+q_j & \geq & 1-\frac{i}{n} & \mbox{ for every $i,j\in I$ with $i\leq j$},\\
& & p & \geq & \frac{i}{n}q_i & \mbox{ for every $i\in I$, and}\\
& & p,q_i & \in & \mathbb{R} & \mbox{ for every $i\in I$}.
\end{array}
$$
For our argument, we actually only need the 
{\it weak duality inequality chain} for $(P)$ and $(D)$,
which holds for all pairs of feasible solutions of $(P)$ and $(D)$:
\begin{eqnarray*}
\sum\limits_{i,j\in I:i\leq j}\left(1-\frac{i}{n}\right)y_{i,j}
& \leq &
\sum\limits_{i,j\in I:i\leq j}(q_i+q_j)y_{i,j}
+\sum\limits_{i\in I}\left(p-\frac{i}{n}q_i\right)x_i\\
& = &
\sum\limits_{i\in I}x_ip
+\sum\limits_{i\in I}q_i\left(2y_{i,i}+\sum\limits_{j\in I:j<i}y_{j,i}+\sum\limits_{j\in I:i<j}y_{i,j}-\frac{i}{n}x_i\right)\\
& = & p.
\end{eqnarray*}
Theorem \ref{theorem1} follows by combining (\ref{e2}), weak duality ${\rm OPT}(P)\leq {\rm OPT}(D)$, and the following lemma.

\begin{lemma}\label{lemma1}
${\rm OPT}(D)\leq p_{\Delta}$ for $p_{\Delta}=2\left(\frac{\Delta}{n}\right)^2+\left(\frac{\Delta}{n}\right)-2\left(\frac{\Delta}{n}\right)\sqrt{\left(\frac{\Delta}{n}\right)^2+\left(\frac{\Delta}{n}\right)}$.
\end{lemma}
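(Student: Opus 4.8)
The plan is to exhibit an explicit feasible solution $(p,(q_i)_{i\in I})$ of the dual program $(D)$ with objective value exactly $p_\Delta$; since $(D)$ is a minimization problem, any feasible point gives an upper bound on ${\rm OPT}(D)$. Write $\delta=\Delta/n$ for brevity. The two families of constraints are $q_i+q_j\ge 1-\tfrac{i}{n}$ for $i\le j$ and $p\ge \tfrac{i}{n}q_i$ for all $i\in I$. Since the right-hand side $1-\tfrac{i}{n}$ of the first constraint is decreasing in $i$ (and the constraint is required for all $j\ge i$, so in particular for $j=i$), the binding cases are expected to be $2q_i\ge 1-\tfrac{i}{n}$, i.e.\ it suffices to take $q_i$ of the form $q_i=\tfrac12\left(1-\tfrac{i}{n}\right)+(\text{something nonnegative and nonincreasing enough})$; the cleanest guess is to make $q$ itself nonincreasing so that $q_i+q_j\ge 2q_j\ge 1-\tfrac{j}{n}$, wait — we need $\ge 1-\tfrac{i}{n}$ with $i\le j$, so we instead want $q_i+q_j\ge 2q_i$ ... the correct monotonicity is to have $q$ \emph{nondecreasing}? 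No: with $i\le j$ we want the sum to dominate $1-i/n$; if $q$ is nonincreasing then $q_i+q_j \le 2q_i$, unhelpful, while if $q$ is nondecreasing then $q_i+q_j\ge 2q_i \ge 1-\tfrac{i}{n}$ provided $q_i\ge\tfrac12(1-\tfrac{i}{n})$ for every $i$. So I would look for a nondecreasing $q_i$ with $q_i\ge\tfrac12(1-\tfrac i n)$ and with $\tfrac{i}{n}q_i$ maximized (over $i$) at value exactly $p_\Delta$.

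The natural candidate: for small $i$ set $q_i=\tfrac12(1-\tfrac in)$ (the first constraint tight), and for $i$ beyond some threshold $t$ freeze $q_i$ at the constant value $q_i=q_t=\tfrac12(1-\tfrac tn)$ — this keeps $q$ nondecreasing is false since $\tfrac12(1-\tfrac in)$ is decreasing; so instead I reverse: set $q_i=\tfrac12(1-\tfrac in)$ for $i\ge$ some threshold and set $q_i$ to the \emph{constant} $\tfrac12(1-\tfrac{t}{n})$ for $i\le t$, giving a nonincreasing $q$ — but then $q_i+q_j$ with $i\le j$ equals at least $2q_j$, which need not reach $1-\tfrac in$. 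The resolution is that we do not need $q$ monotone at all: we need, for every pair $i\le j$, that $q_i+q_j\ge 1-\tfrac in$. With $q_i$ decreasing, the worst pair for a fixed $i$ is $j=n$ (largest $j$, smallest $q_j$), so the effective constraint is $q_i+q_\Delta\ge 1-\tfrac in$ for all $i$, i.e.\ $q_i\ge 1-\tfrac in-q_\Delta$. Hence I would set $q_\Delta$ to a parameter $c$, set $q_i=\max\{c,\,1-\tfrac in-c\}$, and then $p=\max_i \tfrac in q_i$. For $i$ where the first term wins ($1-\tfrac in-c\le c$, i.e.\ $i\ge n(1-2c)$, in particular $i=\Delta$) we get contribution $\tfrac in c$, maximized at $i=\Delta$ giving $\delta c$; for $i$ where the second term wins we get $\tfrac in(1-\tfrac in -c)$, a downward parabola in $i/n$ maximized at $i/n=(1-c)/2$ with value $\tfrac14(1-c)^2$. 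So $p=\max\{\delta c,\ \tfrac14(1-c)^2\}$ (when $(1-c)/2\le \delta$, i.e. the parabola's vertex is attained within range; otherwise the parabola piece is maximized at the right end $i=\Delta$). Minimizing $\max\{\delta c,\tfrac14(1-c)^2\}$ over $c$ balances the two: $\delta c=\tfrac14(1-c)^2$, a quadratic in $c$ whose relevant root, substituted back, should yield precisely $p_\Delta=2\delta^2+\delta-2\delta\sqrt{\delta^2+\delta}$. I would verify this by solving $c^2-(2+4\delta)c+1=0$, taking $c=1+2\delta-2\sqrt{\delta^2+\delta}$, and checking $\delta c=2\delta^2+\delta-2\delta\sqrt{\delta^2+\delta}=p_\Delta$.

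Concretely the steps are: (1) set $c=1+2\delta-2\sqrt{\delta^2+\delta}$ and define $q_i=\max\{c,\ 1-\tfrac in-c\}$ for $i\in I$ and $p=p_\Delta$; (2) check $0\le c$ and that $c$ is the balancing root, so $\delta c=p_\Delta$ and also $\tfrac14(1-c)^2=p_\Delta$; (3) verify the first dual constraint: for $i\le j$, $q_i+q_j\ge q_i+q_\Delta\ge (1-\tfrac in-c)+c=1-\tfrac in$ — here I must also confirm $q_\Delta=c$, i.e.\ that at $i=\Delta$ the ``$c$'' branch is the active one, equivalently $1-\delta-c\le c$, which reduces to $c\ge\tfrac{1-\delta}{2}$, a one-line inequality in $\delta$; (4) verify the second dual constraint $p\ge\tfrac in q_i$ for all $i$: on the ``$c$''-branch $\tfrac in q_i=\tfrac in c\le \delta c=p_\Delta=p$; on the other branch $\tfrac in q_i=\tfrac in(1-\tfrac in-c)\le\tfrac14(1-c)^2=p_\Delta=p$, the first inequality being AM–GM / the parabola bound. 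The main obstacle I anticipate is purely bookkeeping: pinning down exactly which branch of the $\max$ defining $q_i$ is active at the two relevant indices ($i=\Delta$ and $i\approx n(1-c)/2$) as a function of $\delta$, and confirming that $(1-c)/2\le\delta$ so that the parabola's vertex lies in the feasible index range — if it does not, the parabola piece is instead maximized at $i=\Delta$ and one must re-examine whether the balance point changes; I expect, however, that for all $\delta\in(0,1)$ the stated $c$ works and the verification is routine algebra, so the lemma follows.
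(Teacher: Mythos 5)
Your proof is correct. With your notation $\delta=\Delta/n$: the nonnegativity $c=1+2\delta-2\sqrt{\delta^2+\delta}\ge 0$ follows from $(1+2\delta)^2-4(\delta^2+\delta)=1\ge 0$; the identities $\delta c=\frac14(1-c)^2=p_\Delta$ come out exactly as you compute from the quadratic $c^2-(2+4\delta)c+1=0$; the inequality $c\ge\frac{1-\delta}{2}$ reduces to $(1-3\delta)^2\ge 0$; and the obstacle you anticipate at the end is not one, since for feasibility you only need $\frac{i}{n}\left(1-\frac{i}{n}-c\right)\le\frac14(1-c)^2$, which holds unconditionally --- the location of the parabola's vertex matters only for tightness, which the lemma does not assert. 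In fact step (3) is even simpler than you make it: $q_i\ge 1-\frac{i}{n}-c$ and $q_j\ge c$ hold directly from the definition of $q$ as a maximum, so $q_i+q_j\ge 1-\frac{i}{n}$ without identifying the active branch at $j=\Delta$. Your overall strategy --- exhibiting a feasible point of the minimization problem $(D)$ with objective value $p_\Delta$ --- coincides with the paper's, but the certificate differs: the paper sets $q_0=1$ and $q_i=\frac{n}{i}p_\Delta$ for $i\ge 1$, saturating every constraint $p\ge\frac{i}{n}q_i$ and reducing the pair constraints to $\frac{p}{x}+x+\frac{n}{\Delta}p\ge 1$, which is settled by $\min_x\left(\frac{p}{x}+x\right)=2\sqrt{p}$ together with the defining equation $2\sqrt{p_\Delta}+\frac{n}{\Delta}p_\Delta=1$; your piecewise-linear $q_i=\max\left\{c,\,1-\frac{i}{n}-c\right\}$ instead saturates the pair constraints on one branch and bounds $\frac{i}{n}q_i$ by the parabola maximum on the other. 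Both certificates give the same value; yours keeps the verification entirely elementary (no hyperbola, no special treatment of $i=0$), while the paper's is a little shorter to state.
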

\begin{proof}
Since $(D)$ is a minimization problem, it suffices to provide a feasible solution with objective function value $p_{\Delta}$.
Therefore, let $p=p_{\Delta}$, $q_0=1$, and $q_i=\frac{n}{i}p$ for every $i\in I\setminus \{ 0\}$.
Note that this ensures the dual constraint {\it $p\geq\frac{i}{n}q_i$ for every $i\in I$}.
Since $q_i$ is decreasing for $i\geq 1$, and $\frac{i}{n}\in \left[0,\frac{\Delta}{n}\right]$,
we have $q_j\geq \frac{n}{\Delta}p$ for $j\in I$, and the dual constraint
\begin{eqnarray}\label{e3}
\mbox{\it $q_i+q_j \geq 1-\frac{i}{n}$ for every $i,j\in I$ with $i\leq j$}
\end{eqnarray}
holds provided that 
\begin{eqnarray}\label{e4}
\mbox{$\frac{p}{x}+x+\frac{n}{\Delta}p\geq 1$ for $x\in \left(0,\frac{\Delta}{n}\right]$.}
\end{eqnarray}
The function $x\mapsto \frac{p}{x}+x$ for $x\in (0,\infty)$ assumes its minimum of $2\sqrt{p}$ at $\sqrt{p}$.
Hence, (\ref{e4}) holds provided that 
$2\sqrt{p}+\frac{n}{\Delta}p=1$.
It is easy to verify that this holds indeed for $p=p_{\Delta}$, which completes the proof.
\end{proof}

\end{document}